\theoremstyle{plain}
\newtheorem{prop}{Proposition}
\theoremstyle{definition}
\newtheorem{remark}{Remark}
\newcommand*{\equ}[2][\empty]{
	\begin{equation}
	\ifthenelse{\equal{#1}{\empty}}{}{\label{#1}}
	#2
	\end{equation}
}
\newcommand*{\E}{\mbox{E}}
\begin{document}


\title{Bias-correction of the maximum likelihood estimator for the $\alpha$-Brownian bridge}

\author{Maik G\"orgens}
\author{M\aa ns Thulin}
\date{\today}

\begin{abstract}
\noindent 
The $\alpha$-Brownian bridge, or scaled Brownian bridge, is a generalization of the Brownian bridge with a scaling parameter that determines how strong the force that pulls the process back to 0 is. The bias of the maximum likelihood estimator of the parameter $\alpha$ is derived and a bias-correction that improves the estimator substantially is proposed. The properties of the bias-corrected estimator and four Bayesian estimators based on non-informative priors are evaluated in a simulation study.
\noindent 
   \\[1.5mm] {\bf Keywords:} $\alpha$-Brownian bridge, bias-correction, estimation, scaled Brownian bridge.
\end{abstract}

\maketitle


\section{Introduction}

Let $W=(W_t)_{t\in\lbrack 0,1\rbrack}$ be a standard Brownian motion. For $\alpha\geq 0$, consider the stochastic differential equation $$dX_t^{(\alpha)}=dW_t-\frac{\alpha}{1-t}X_t^{(\alpha)}dt, \quad X_0^{(\alpha)}=0, \quad t\in \lbrack 0,1).$$ The solution to this equation is $X^{(\alpha)}=(X_t^{(\alpha)})_{t\in\lbrack 0,1)}$ with $$X_t^{(\alpha)}=\int_0^t\Big(\frac{1-t}{1-s}\Big)^\alpha dW_s,\qquad t\in\lbrack 0,1),$$
a process which returns to 0 at time 1 almost surely when $\alpha>0$. The process $X^{(\alpha)}$ is known as an $\alpha$-Brownian bridge, $\alpha$-Wiener bridge or scaled Brownian bridge and can be viewed as a flexible alternative to the standard Brownian bridge. It includes Brownian motion ($\alpha=0$) and the usual Brownian bridge ($\alpha=1)$ as special cases. This paper is concerned with estimation of the scaling parameter $\alpha$ given a sample path observed until time $T<1$. Estimating $\alpha$ is of interest since $\alpha$ determines \emph{how} the process tends to 0: if $0<\alpha<1$ the process tends to 0 slower than the Brownian bridge and if $\alpha>1$ it tends to 0 faster than the Brownian bridge (see Figure~\ref{fig1}).
\begin{figure}
\begin{center}
 \caption{The influence of $\alpha$ on the ``expected future'' for different values of $\alpha$.}\label{fig1}
   \includegraphics[width=\textwidth]{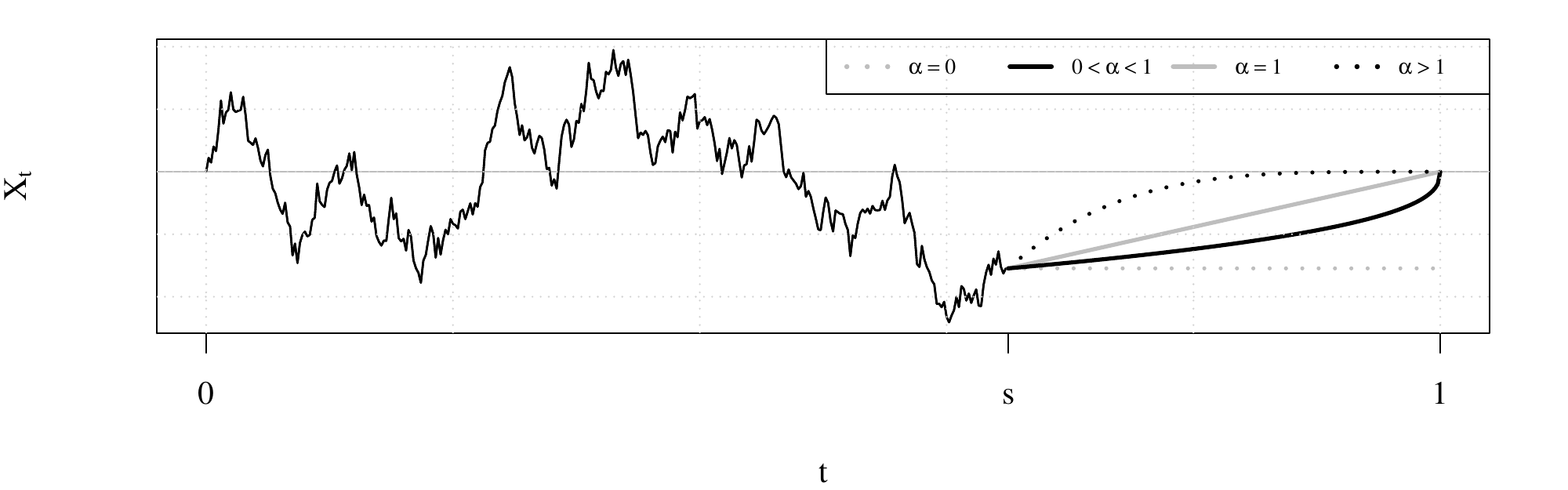}
 \end{center}
\end{figure}

The $\alpha$-Brownian bridge was introduced by \citet{brennan90}, who used it to model arbitrage profit in the absence of transaction costs. It has also been used to model exchange rate dynamics \citep{trede07} and the degree of interventionist policies in the run-up to a monetary union \citep{sondermann11}. A possible further application in financial theory is the following. It has been reported that stock prices tend to end up at strike prices of heavily traded vanilla options at the time of their maturity; see for example~\citet{Ave03} and the references therein. Brownian bridges have been used to model this phenomenon. However, the rate of convergence to the strike price will likely depend on the behavior of the market. In a cautious market, traders will start early on to push the stock price to the strike price. Then an $\alpha$-Brownian bridge with an $\alpha > 1$ will be more suitable to model the pinning behavior than the usual Brownian bridge. In an incautious  market, traders will start late to push the stock price to the strike price and an $\alpha$-Brownian bridge with an $0 < \alpha < 1$ will be more suitable to model the behavior. But the correct value of $\alpha$ is required in order to develop optimal selling strategies for stocks showing the pinning phenomenon. In this context we mention that the optimal stopping problem for the usual Brownian bridge was solved in~\citet{Eks09}. Other possible areas of applications include for instance modeling of animal movements, where the $\alpha$-Brownian bridge seems like a strong candidate to replace the Brownian bridge model proposed by \citet{horne07}.

Starting with a paper by \citet{mansuy04}, the $\alpha$-Brownian bridge has attracted considerable interest in the stochastics community. Estimation was discussed by \citet{barczy10}, \citet{barczy11a} and \citet{zhao12}, all of whom studied some properties of the maximum likelihood estimator (MLE) of $\alpha$. \citet{sebaiy13} studied the least squares estimator of $\alpha$ when $W$ is replaced by a fractional Brownian motion. \citet{zhao13} and \citet{gorgens14} studied hypothesis testing problems for $\alpha$.

Previous studies of estimators of $\alpha$ have focused on asymptotic properties, i.e. the behavior of the MLE as $T\rightarrow 1$. In Section \ref{bias} we derive the bias of the MLE for $T<1$, which is found to be surprisingly large. In Section \ref{estimators} we propose a bias-correction of the MLE and introduce some Bayesian estimators of $\alpha$. We then evaluate the properties of the Bayesian estimators and the bias-corrected MLE in a simulation study. Finally, some open problems are discussed in Section \ref{discussion} and  proofs are given in an appendix.

\begin{remark}
We mention here that we may just as well define the $\alpha$-Brownian bridge on an interval $[0,S]$: let $X^{(\alpha, S)}=(X^{(\alpha, S)}_t)_{t \in [0,S)}$ be the strong solution of the stochastic differential equation
\[ dX^{(\alpha, S)}_t = dW_t - \frac{\alpha X^{(\alpha, S)}_t}{S-t} dt, \qquad X^{(\alpha, S)}_0 = 0, \quad 0 \leq t < S. \]
Then, for $\alpha > 0$, we have $\lim_{t \rightarrow S} X^{(\alpha, S)}_t = 0$. The $\alpha$-Brownian bridge is self-similar:
\[ \left( X^{(\alpha, S)}_t \right)_{t \in [0, S]} =_d \left( \sqrt{S} X^{(\alpha, 1)}_{t/S} \right)_{t \in [0, S]}. \]
From this self-similarity the results in this paper easily extend to $\alpha$-Brownian bridges on an interval $[0,S]$, so it suffices to study the simpler setting where $S=1$.
\end{remark}

\section{The bias of the MLE}\label{bias}

The MLE of $\alpha$ based on a trajectory observed until time $T < 1$ is given by
\[
\hat{\alpha}_{MLE}=-\Big(\int_0^t\frac{X_s^{(\alpha)}}{1-s}dX_s^{(\alpha)}\Big)\Big/\Big(\int_0^t\frac{(X_s^{(\alpha)})^2}{(1-s)^2}ds\Big)\qquad\mbox{for } t\in (0,1).
\]
\citet{barczy11b} showed that $\hat{\alpha}_{MLE}$ is a strongly consistent estimator of $\alpha$. It is also the least squares estimator \citep{sebaiy13}. \citet[Section 2.2]{gorgens14} showed that the MLE can be written as
\begin{equation}\label{mle.eq}
\hat{\alpha}_{MLE}=\Big(-\frac{(X_T^{(\alpha)})^2}{1-T}+\int_0^T\frac{(X_s^{(\alpha)})^2}{(1-s)^2}ds-\ln(1-T)\Big)\Big/\Big(2\int_0^T\frac{(X_s^{(\alpha)})^2}{(1-s)^2}ds\Big)
\end{equation}
which allows for a straightforward computation of $\hat{\alpha}_{MLE}$ given the sample path.

Next, we present a result for the expected value of the MLE, the proof of which is given in the appendix.

\begin{prop}\label{prop1}
  The expected value of the maximum likelihood estimator $\hat{\alpha}_{MLE}$ of $\alpha$ based on the observations up to time $T<1$ is
  \begin{align}
    \E_\alpha [ \hat{\alpha}_{MLE} ] &= \frac{1}{2} + \frac{(1-T)^{\frac{1-2\alpha}{4}} }{\sqrt{2}} \int_{\frac{1-2\alpha}{2}}^\infty \frac{ (1-T)^u - (1-T)^{-u} }{\left((1 + \frac{1-2\alpha}{2u})(1-T)^u + (1 - \frac{1-2\alpha}{2u})(1-T)^{-u}\right)^{3/2}} du \label{E:Gen_Form} \\
    &\qquad - \frac{\ln(1-T)(1-T)^{\frac{1-2\alpha}{4}} }{\sqrt{2}} \int_{\frac{1-2\alpha}{2}}^\infty \frac{u}{\sqrt{(1 + \frac{1-2\alpha}{2u})(1-T)^u + (1 - \frac{1-2\alpha}{2u})(1-T)^{-u}}} du. \notag
  \end{align}
\end{prop}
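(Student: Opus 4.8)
The plan is to start from the closed form \eqref{mle.eq}. Writing
\[
Q = \int_0^T \frac{(X_s^{(\alpha)})^2}{(1-s)^2}\,ds, \qquad R = \frac{(X_T^{(\alpha)})^2}{1-T},
\]
the estimator becomes $\hat{\alpha}_{MLE} = \tfrac12 - \tfrac{R}{2Q} - \tfrac{\ln(1-T)}{2Q}$, so that $\E_\alpha[\hat{\alpha}_{MLE}] = \tfrac12 - \tfrac12\E_\alpha[R/Q] - \tfrac{\ln(1-T)}{2}\E_\alpha[1/Q]$. Everything therefore reduces to the two expectations $\E_\alpha[1/Q]$ and $\E_\alpha[R/Q]$. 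To handle the reciprocal I would use the resolvent identity $1/Q = \int_0^\infty e^{-\lambda Q}\,d\lambda$ (valid since $Q>0$ a.s.), which after Tonelli gives
\[
\E_\alpha\Big[\tfrac{1}{Q}\Big] = \int_0^\infty \E_\alpha\big[e^{-\lambda Q}\big]\,d\lambda, \qquad \E_\alpha\Big[\tfrac{R}{Q}\Big] = \int_0^\infty \E_\alpha\big[R\,e^{-\lambda Q}\big]\,d\lambda,
\]
so the whole problem rests on the Laplace transform of the quadratic functional $Q$ and on its version weighted by the terminal term $R$.

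Next I would remove the dependence on $\alpha$ inside these transforms by a Girsanov/Cameron--Martin change of measure. Let $P_\alpha$ denote the law of $X^{(\alpha)}$ on $C[0,T]$ and $P_0$ the Wiener measure; the drift $-\alpha x/(1-s)$ gives
\[
\frac{dP_\alpha}{dP_0} = \exp\!\Big(-\alpha\!\int_0^T \tfrac{X_s}{1-s}\,dX_s - \tfrac{\alpha^2}{2}Q\Big) = (1-T)^{-\alpha/2}\exp\!\Big(-\tfrac{\alpha}{2}R + \tfrac{\alpha-\alpha^2}{2}Q\Big),
\]
where the second equality uses the It\^o identity $\int_0^T \tfrac{X_s}{1-s}\,dX_s = \tfrac12(R + \ln(1-T) - Q)$ that underlies \eqref{mle.eq}. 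Consequently every transform above is expressed through the single joint Laplace transform under Brownian motion, $\Phi(\mu,\nu) = \E_0[e^{-\mu Q - \nu R}]$: indeed $\E_\alpha[e^{-\lambda Q}] = (1-T)^{-\alpha/2}\Phi(\lambda - \tfrac{\alpha-\alpha^2}{2},\tfrac{\alpha}{2})$ and, differentiating in $\nu$, $\E_\alpha[R\,e^{-\lambda Q}] = -(1-T)^{-\alpha/2}\partial_\nu\Phi(\lambda - \tfrac{\alpha-\alpha^2}{2},\tfrac{\alpha}{2})$. A useful internal check is that $\E_0[dP_\alpha/dP_0]=1$ forces $\Phi(-\tfrac{\alpha-\alpha^2}{2},\tfrac{\alpha}{2}) = (1-T)^{\alpha/2}$.

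The core computation is then $\Phi(\mu,\nu)$, which I would obtain by the Feynman--Kac/Riccati method for Gaussian quadratic functionals. Writing $v(t,x)=\E_0[\exp(-\mu\int_t^T \tfrac{W_s^2}{(1-s)^2}ds - \nu\tfrac{W_T^2}{1-T})\mid W_t=x]$ and using the Gaussian ansatz $v=a(t)e^{-\frac12 b(t)x^2}$ reduces the problem to a scalar Riccati equation, which linearizes to $\psi''(s) = \frac{2\mu}{(1-s)^2}\psi(s)$ on $[0,T]$, with $\Phi(\mu,\nu)=\psi(0)^{-1/2}$ and boundary data at $s=T$ encoding the terminal penalty $\nu$. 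This is an Euler equation with explicit power solutions $(1-s)^{(1\pm\sqrt{1+8\mu})/2}$, so $\psi(0)$ is available in closed form.

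Finally I would substitute $u = \tfrac12\sqrt{1+8\mu}$, i.e. $\mu = \tfrac{u^2}{2}-\tfrac18$, under which $d\lambda = u\,du$ and the range $\lambda\in[0,\infty)$ becomes $u\in[\tfrac{|1-2\alpha|}{2},\infty)$. A direct simplification then matches $\E_\alpha[e^{-\lambda Q}]$ and $\E_\alpha[R\,e^{-\lambda Q}]$ with the integrands in \eqref{E:Gen_Form}, the powers $(1-T)^{\pm u}$ arising from the two exponents $(1\pm\sqrt{1+8\mu})/2$ and the factors $(1\pm\tfrac{1-2\alpha}{2u})$ from the boundary coefficients. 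The main obstacle is precisely this Feynman--Kac evaluation of $\Phi$ together with the bookkeeping of constants; a secondary subtlety is the lower limit, which the proposition writes as $\tfrac{1-2\alpha}{2}$ rather than $\tfrac{|1-2\alpha|}{2}$. These agree because both integrands in \eqref{E:Gen_Form} are odd in $u$ (the radicand is even, while $(1-T)^u-(1-T)^{-u}$ and the prefactor $u$ are odd), so the interval between $\tfrac{1-2\alpha}{2}$ and $\tfrac{|1-2\alpha|}{2}$ contributes zero; I would also verify convergence as $u\to\infty$ (exponential decay from $(1-T)^{u/2}$) to justify the interchange of integrals.
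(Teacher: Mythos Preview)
Your proposal is correct and follows the same overall architecture as the paper: write $\hat{\alpha}_{MLE}=\tfrac12-\tfrac{R}{2Q}-\tfrac{\ln(1-T)}{2Q}$, express $\E_\alpha[1/Q]$ and $\E_\alpha[R/Q]$ as $\int_0^\infty(\cdots)\,d\lambda$ via the identity $1/Q=\int_0^\infty e^{-\lambda Q}\,d\lambda$ (which is exactly the $k=1$ case of the Williams--Cressie formula the paper invokes), feed in the joint Laplace transform of $(X_T^2,I_T)$, and then perform the substitution $u=\tfrac12\sqrt{1+8\mu}$, equivalently the paper's $u=u(t)=\tfrac12\sqrt{8t+(2\alpha-1)^2}$.

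The genuine difference is in how the Laplace transform is obtained. The paper simply quotes it from \citet[Theorem~21]{barczy11b} already under $P_\alpha$ and differentiates once in $s$ to get the $X_T^2$-weighted version. You instead propose to derive it: first a Girsanov step reduces everything to the single Brownian functional $\Phi(\mu,\nu)=\E_0[e^{-\mu Q-\nu R}]$, and then Feynman--Kac with a Gaussian ansatz turns $\Phi$ into the Euler ODE $\psi''=2\mu(1-s)^{-2}\psi$, whose power solutions $(1-s)^{(1\pm 2u)/2}$ produce the $(1-T)^{\pm u}$ structure. Your route is longer but self-contained and explains \emph{why} the answer has its hyperbolic form; the paper's route is shorter but outsources precisely this computation. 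Your Girsanov bookkeeping (the It\^o identity for $\int_0^T\tfrac{X_s}{1-s}\,dX_s$, the density $(1-T)^{-\alpha/2}e^{-\alpha R/2+(\alpha-\alpha^2)Q/2}$, and the consistency check $\Phi(-\tfrac{\alpha-\alpha^2}{2},\tfrac{\alpha}{2})=(1-T)^{\alpha/2}$) is correct, as is your observation that both integrands in \eqref{E:Gen_Form} are odd in $u$, so the lower limit $\tfrac{1-2\alpha}{2}$ may replace $\tfrac{|1-2\alpha|}{2}$; the paper does not comment on this point.
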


The expectation ~\eqref{E:Gen_Form} and the corresponding bias are shown in Figure \ref{fig2} for $T\in\{0.7,0.8,0.9\}$ and $\alpha\in\lbrack 0,10\rbrack$. The bias follows the same pattern for different $T$, and is increasing in $T$. We can see an almost constant behavior of the bias of $\hat{\alpha}_{MLE}$ for larger values of $\alpha$. The following result gives an explanation of this. We use the notation $f(\alpha) \sim g(\alpha)$ for $\lim_{\alpha \rightarrow \infty} [f(\alpha) - g(\alpha)] = 0$.
\begin{figure}
\begin{center}
 \caption{The expectation and the bias of the MLE when $T\in\{0.7,0.8,0.9\}$.}\label{fig2}
   \includegraphics[width=\textwidth]{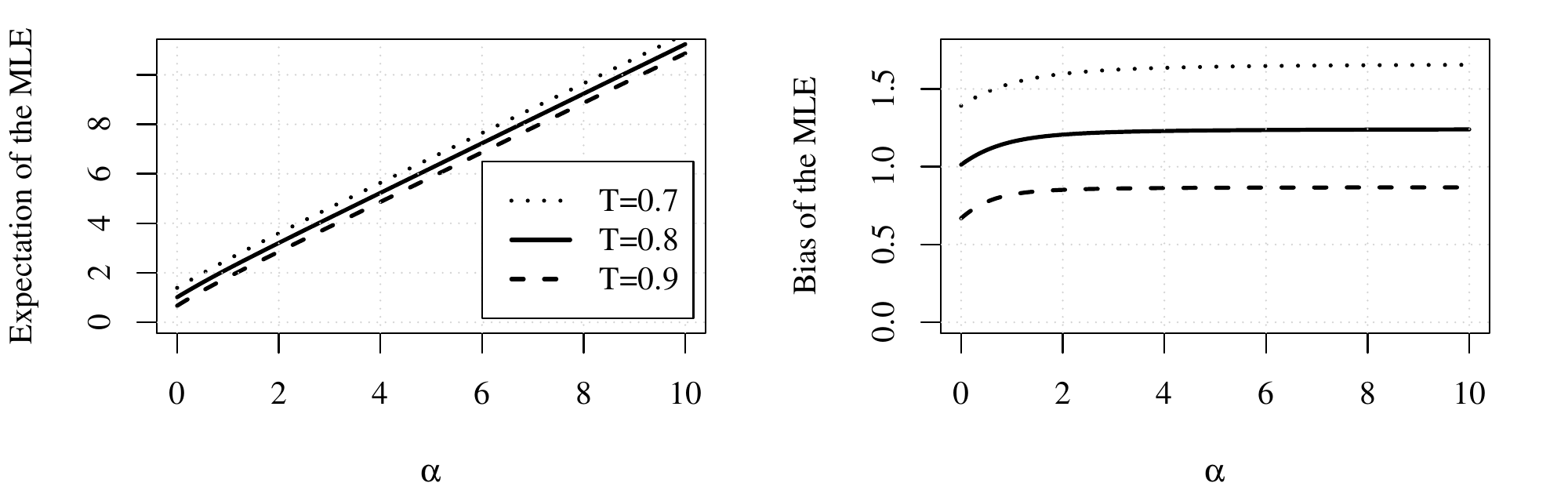}
 \end{center}
\end{figure}

\begin{prop}\label{prop2}
  For the bias $\E_\alpha [ \hat{\alpha}_{MLE} ]$ of the MLE based on the observations up to time $T<1$ we have
  \[ \E_\alpha [ \hat{\alpha}_{MLE} ] - \alpha \sim - \frac{2}{\ln(1-T)}. \]
\end{prop}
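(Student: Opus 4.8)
The plan is to work from the closed form in Proposition~\ref{prop1} and to carry out a Laplace-type asymptotic analysis as $\alpha\to\infty$. First I would set $\beta=-\ln(1-T)>0$ and $m=\alpha-\tfrac12$, so that the lower limit of both integrals is $\tfrac{1-2\alpha}{2}=-m$, the common prefactor equals $(1-T)^{(1-2\alpha)/4}/\sqrt2=e^{\beta m/2}/\sqrt2\to\infty$, and the base of both denominators becomes
\[
D(u)=\Big(1+\tfrac{-m}{u}\Big)e^{-\beta u}+\Big(1-\tfrac{-m}{u}\Big)e^{\beta u}=2\cosh(\beta u)+\frac{2m}{u}\sinh(\beta u).
\]
The decisive structural observation is that $D$ is \emph{even} in $u$, with a removable singularity at $u=0$ where $D(0)=2+2m\beta>0$. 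Consequently the first integrand $-2\sinh(\beta u)/D^{3/2}$ and the second integrand $u/D^{1/2}$ are both odd, so $\int_{-m}^{m}(\cdots)\,du=0$ and each integral collapses to $\int_{m}^{\infty}(\cdots)\,du$. This both explains why the exponentially large prefactor does not cause a blow-up and localizes the whole expression near the endpoint $u=m$, where the coefficient $1-m/u$ vanishes and $D(m)=2e^{\beta m}$.

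Next I would substitute $u=m+w$ with $w\ge0$. Since $\cosh(\beta u),\sinh(\beta u)=\tfrac12 e^{\beta(m+w)}\big(1+O(e^{-2\beta u})\big)$ and the $e^{-\beta u}$ term in $D$ is relatively $O(e^{-2\beta u})$, one obtains
\[
D(m+w)=e^{\beta(m+w)}\,\frac{2+w/m}{1+w/m}\,\big(1+O(e^{-2\beta m})\big).
\]
The factor $e^{-\beta m/2}$ arising from $D^{-1/2}$ and $D^{-3/2}$ cancels the prefactor $e^{\beta m/2}/\sqrt2$, and writing $g(w)=\frac{1+w/m}{2+w/m}$ the expectation reduces, up to additively negligible terms, to
\[
\E_\alpha[\hat{\alpha}_{MLE}]-\tfrac12=\frac{1}{\sqrt2}\int_0^\infty e^{-\beta w/2}\,g(w)^{1/2}\big[\beta(m+w)-g(w)\big]\,dw+o(1).
\]

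Finally I would extract the asymptotics of this integral. The factor $e^{-\beta w/2}$ confines the mass to $w=O(1)$, where $w/m\to0$ and $g\to\tfrac12$; using $\int_0^\infty e^{-\beta w/2}\,dw=2/\beta$ the $\beta m$ contribution reproduces exactly the leading term $m$. The delicate point, and the main obstacle, is that because of the large factor $\beta m$ the first-order correction $g(w)^{1/2}=\tfrac1{\sqrt2}\big(1+\tfrac{w}{4m}+\cdots\big)$ is multiplied by $\beta m$ and therefore contributes at \emph{constant} order rather than vanishing; this piece must be retained alongside the genuine $O(1)$ terms $\beta w$ and $-g$. Collecting them with $\int_0^\infty w\,e^{-\beta w/2}\,dw=4/\beta^2$ yields the constant $\tfrac12\big[(1+\tfrac14)\beta\cdot\tfrac{4}{\beta^2}-\tfrac1\beta\big]=\tfrac2\beta$, whence $\E_\alpha[\hat{\alpha}_{MLE}]-\alpha\to\tfrac2\beta=-2/\ln(1-T)$. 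The remaining work is to make this rigorous: one must dominate the integrand uniformly in $m$ to justify dominated convergence despite the $\beta m$ factor—most cleanly after subtracting the leading $m$ term and treating the region $w\gtrsim m$ separately—and check that the discarded tail and the super-exponentially small corrections from $\cosh,\sinh\approx\tfrac12 e^{\,\cdot}$ do not affect the constant.
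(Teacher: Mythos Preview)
Your proposal is correct and reaches the right constant $-2/\ln(1-T)$; the route, however, differs from the paper's in two noteworthy ways.

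First, your parity observation---that $D(u)=2\cosh(\beta u)+\tfrac{2m}{u}\sinh(\beta u)$ is even, so the two integrands are odd and $\int_{-m}^{m}=0$---is stated explicitly and is the cleanest way to see why the exponentially large prefactor $e^{\beta m/2}$ does not blow up the expectation. The paper passes through the same reduction tacitly: after the substitution $v=2u/(2\alpha-1)$ the lower limit becomes $-1$, yet the integrals are written as $\int_{1}^{\infty}$; this is only correct because of the very parity you isolate. Making it explicit is an improvement.

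Second, once localized at $u\ge m$, you shift by $w=u-m$ and carry out a Laplace-type expansion of $g(w)=(1+w/m)/(2+w/m)$ to the order needed (correctly noting that the $O(w/m)$ term in $g^{1/2}$, multiplied by $\beta m$, contributes at constant order). The paper instead substitutes $v=u/m$ and performs two rounds of integration by parts to extract the boundary contributions at $v=1$, obtaining $I_1\sim\tfrac{1}{2\ln(1-T)}$ and $I_2\sim -\alpha+\tfrac12+\tfrac{5}{2\ln(1-T)}$ separately before combining. Your expansion is more systematic and keeps the two integrals together, at the price of the dominated-convergence bookkeeping you flag (handling the region $w\gtrsim m$ and the factor $\beta m$ after subtracting the leading $m$); the paper's integration-by-parts argument is more elementary and avoids any limiting interchange, but treats the two pieces in isolation and leaves the passage from $\int_{-1}^{\infty}$ to $\int_{1}^{\infty}$ unexplained. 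Either approach yields the stated asymptotic.
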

A proof is found in the appendix.

When $\alpha=1/2$ we can obtain a simpler expression for the bias. Specializing formula~\eqref{E:Gen_Form} to the case $\alpha=1/2$ we obtain
\[ \E_\alpha [ \hat{\alpha}_{MLE} ] = \frac{1}{2} + \frac{1}{\sqrt{2}} \int_0^\infty \frac{ (1-T)^u - (1-T)^{-u} }{\left((1-T)^u + (1-T)^{-u}\right)^{3/2}} du - \frac{\ln(1-T)}{\sqrt{2}} \int_0^\infty \frac{u}{\sqrt{(1-T)^u + (1-T)^{-u}}} du. \]
The substitution $v = \ln(1-T)^u$ in both integrals gives
\equ[E:spec]{ \E_\alpha [ \hat{\alpha}_{MLE} ] = \frac{1}{2} + \frac{1}{\sqrt{2} \ln(1-T)} \int_0^\infty \frac{ e^v - e^{-v} }{\left(e^v + e^{-v}\right)^{3/2}} dv - \frac{1}{\sqrt{2}\ln(1-T)} \int_0^\infty \frac{v}{\sqrt{e^v + e^{-v}}} dv. }
The first integral yields
\equ[E:Int5]{ \int_0^\infty \frac{ e^v - e^{-v} }{\left(e^v + e^{-v}\right)^{3/2}} dv = \left[ - \left(e^v + e^{-v}\right)^{-1/2} \right]_0^\infty = \sqrt{2}, }
and the second integral may be rewritten as
\equ[E:Int6]{ \int_0^\infty \frac{v}{\sqrt{e^v + e^{-v}}} dv = \frac{1}{\sqrt{2}} \int_0^\infty \frac{v}{\sqrt{\cosh(v)}} dv = \frac{1}{\sqrt{2}} A, }
where
\[ A := \int_0^\infty \frac{v dv}{\sqrt{\cosh(v)}} \approx 5.5629. \]
Plugging~\eqref{E:Int5} and~\eqref{E:Int6} into~\eqref{E:spec} we obtain
\[ \E_{1/2} [ \hat{\alpha}_{MLE} ] = \frac{1}{2} + \frac{1 - A/2}{\ln(1-T)}. \]
The bias of the MLE when $\alpha=1/2$ and $0.5<T<1$ is shown in Figure \ref{fig3}. As can be seen, it is quite substantial unless $T$ is very close to 1. In particular, $\E_{1/2} [ \hat{\alpha}_{MLE} ]>1$ when $T<0.97$, meaning that on average the incautious market with $\alpha=1/2$ will be mistaken for a cautious market with $\alpha>1$.
\begin{figure}
\begin{center}
 \caption{The bias of the MLE when $\alpha=1/2$ and $0.5<T<1$.}\label{fig3}
   \includegraphics[width=\textwidth]{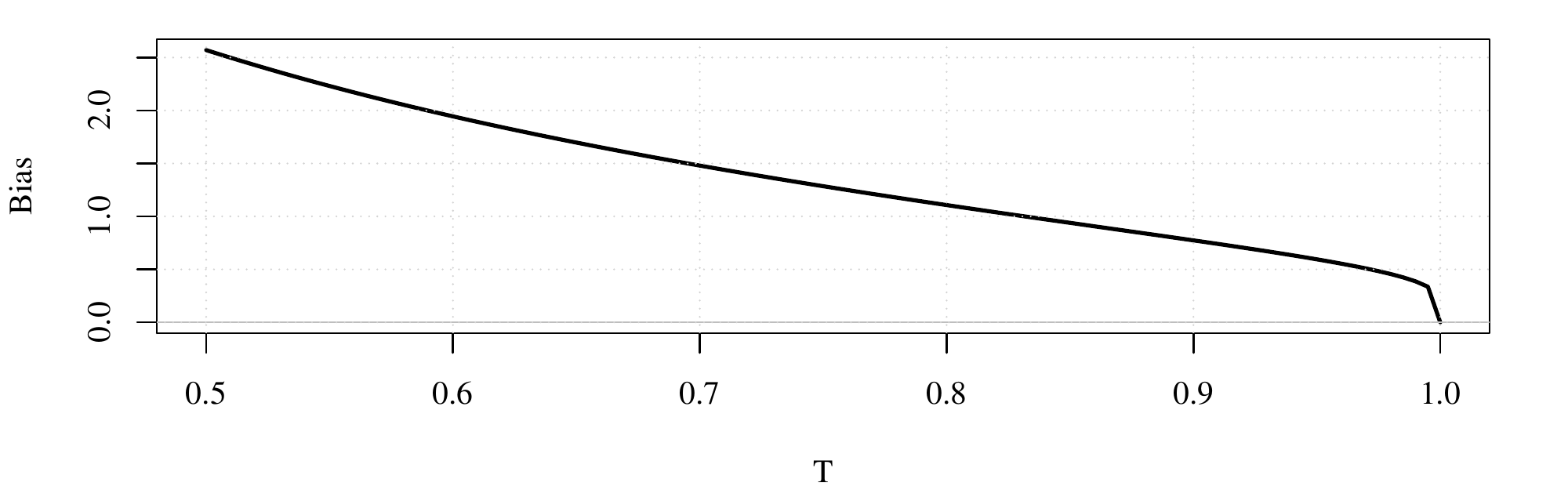}
 \end{center}
\end{figure}

\section{Long-run performance of other estimators}\label{estimators}
\subsection{Alternatives to the MLE}
A bias-correction of the MLE can be obtained by inverting the expectation ~\eqref{E:Gen_Form} numerically, letting the corrected estimator be
$$\hat{\alpha}_{CMLE}=\E_\alpha^{-1} [ \hat{\alpha}_{MLE} ],$$
so that $\hat{\alpha}_{CMLE}$ is the $\alpha$ for which $\E_\alpha [ \hat{\alpha}_{MLE}]$ is the observed value of $\hat{\alpha}_{MLE}$.

Other alternative estimators are Bayesian estimators of $\alpha$. We will study the mean and median of posterior distributions based on the Jeffreys and $U(0,10)$ priors. These can be viewed either as Bayesian estimators or regularized frequentist estimators.

The non-informative \citet{jeffreys46} prior $\pi_J(\alpha|T)$ is proportional to $\sqrt{I_\alpha(T)}$ where $I_\alpha(T)$ is the Fisher information. From \citet[Lemma 10]{barczy11b} it follows that
\[
\pi_J(\alpha|T)\propto \begin{cases}
    \frac{1}{2\alpha-1}\sqrt{(1-T)^{1/4-\alpha/2}-1-\ln(\lbrack 1-T\rbrack^{2\alpha-1})}, & \text{if $\alpha\neq 1/2$}.\\
    \frac{1}{\sqrt{2}}\ln(1-T), & \text{if $\alpha=1/2$}.
  \end{cases}
\]

The Jeffreys prior for $\alpha$ is very heavy-tailed. Its median is roughly 98 when $T=0.8$ and 94 when $T=0.95$, meaning that much of the prior probability mass is concentrated on values of $\alpha$ that seem very unlikely to occur in practice. In applications where no prior information is available, it might therefore be preferable to use a ``low-informative'' prior with bounded support, such as the $U(0,10)$ prior.

The posterior distributions are computed by Bayes formula using the likelihood function, which is
\begin{equation}\label{E:likelihood}
f(\alpha|(X_s)_0^T)=\exp\Big( -\frac{\alpha(X_T^{(\alpha)})^2}{2(1-T)}+\frac{\alpha(1-\alpha)}{2}\int_0^T\frac{(X_s^{(\alpha)})^2}{(1-s)^2}ds-\frac{\alpha}{2}\ln(1-T)  \Big),
\end{equation}
see \citet[Section 2.1]{gorgens14}.

\subsection{Simulation study}

To estimate the bias and MSE of the alternative estimators we performed a simulation study, in which for 100,000 realizations of the process were simulated $\alpha\in\{0,0.5,1,1.5,2,2.5,3,4,6,8,10\}$. Each realization was observed in 300 points and the estimators were computed by approximating the integrals in \eqref{mle.eq} and \eqref{E:likelihood} using the rectangle rule.

The bias and MSE of the alternative estimators are compared to that of the MLE when $T=0.8$ in Figure \ref{fig4}. The figure is qualitatively similar for other values of $T$.

From Figure \ref{fig4} we conclude that $\hat{\alpha}_{CMLE}$ is nearly unbiased and has a lower MSE than the MLE, thereby improving upon the MLE considerably. The Bayesian estimators based on the Jeffreys prior are biased, but except when $\alpha$ is close to 0 the bias is lower than that of the MLE. They also have lower MSE's, which is close to that of the corrected MLE. The Bayesian estimators based on the $U(0,10)$ prior shrinks the estimate towards 5 and have the best performance in that particular region of the parameter space. When $\alpha$ is close to 0 or 10 they are heavily biased. Among the five estimators, only the corrected MLE is nearly unbiased when $\alpha$ is close to 1, meaning that it is the only estimator that reliably identifies both cautious and incautious markets.

\begin{figure}
\begin{center}
 \caption{Bias and MSE of the MLE, the corrected MLE and four Bayesian estimators when $0\leq \alpha \leq 10$ and $T=0.8$.}\label{fig4}
   \includegraphics[width=\textwidth]{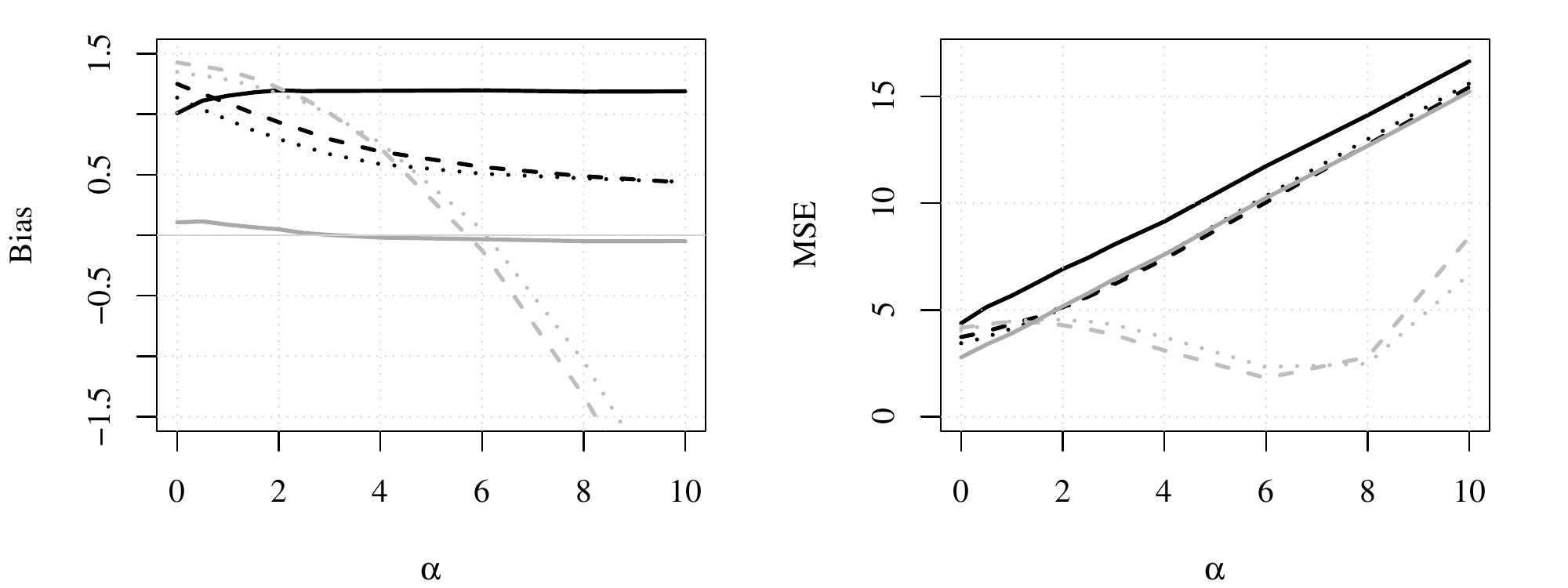}
    \includegraphics[width=\textwidth]{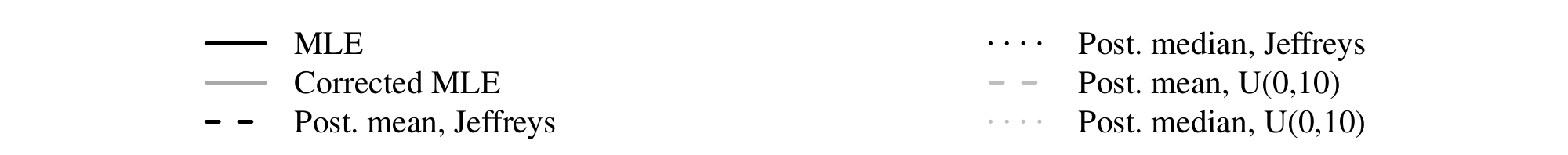}
 \end{center}
\end{figure}

\section{Discussion}\label{discussion}
We have shown both analytically and numerically that the MLE of $\alpha$ is heavily biased, but that it is possible to correct for this bias. The corrected estimator is nearly unbiased and has a lower MSE. It can be recommended for use instead of the MLE. If one for some reason is unwilling to apply the bias-correction, it might be preferable to use a Bayesian estimator based on the Jeffreys prior instead.

We have also seen that when the Jeffreys and $U(0,10)$ priors are used for Bayesian estimation of $\alpha$, the posterior mean and the posterior median often fail to identify incautious markets, that is, situations where $\alpha<1$. We note that although such long-run properties are of a frequentist nature, they still play a part in calibration of Bayesian procedures, for instance if one wishes to apply these estimators for repeated financial decisions. A bias-correction such as that applied to the MLE makes little sense in a Bayesian context, but there may be a truly Bayesian way to obtain estimators with better long-run properties. The estimators in our study are motivated by the squared error loss and the absolute value loss, respectively. For the $\alpha$-Brownian bridge it might prove fruitful to instead use a loss function that penalizes overestimation more than underestimation. We have however not pursued this idea further.

Although estimation and hypothesis testing for the $\alpha$-Brownian bridge has been studied extensively, interval estimation of $\alpha$ remains an open problem. \citet{barczy11b} derived the asymptotic distribution of the MLE, which can be used to construct a confidence interval, but based on our investigation of the properties of this estimator we doubt that the confidence interval based on the MLE will have good performance. Our investigation casts similar doubts on how well Bayesian credible sets based on the Jeffreys and $U(0,10)$ priors will perform. As confidence intervals are considerably more informative than point estimates, we believe this problem to be of great interest.

A further motivation for studying confidence intervals is their connection to hypothesis testing. Papers dealing with hypothesis tests for $\alpha$ \citep{zhao13,gorgens14} have focused on tests of point hypotheses of the type $H_0: \alpha=\alpha_0$ versus $H_1: \alpha=\alpha_1$. In many situations it would be of greater interest to test against a composite hypothesis, e.g. $H_0: \alpha=\alpha_0$ versus $H_1: \alpha\neq \alpha_0$ or $H_0: \alpha\leq\alpha_0$ versus $H_1: \alpha> \alpha_0$. This is possible to do by inverting a well-behaved confidence interval, not only in the frequentist setting, but also in Bayesian inference \citep{thulin14}.

\section*{Appendix}
In this appendix we give the proofs of Propositions \ref{prop1}-\ref{prop2}. Ignoring the superscript $(\alpha)$ and setting $I_T = \int_0^T \frac{(X^{(\alpha)}_s)^2}{(1-s)^2} ds$ we obtain the following simplified expression for the MLE.
\equ[E:MLE_Short]{ \hat{\alpha}_{MLE} = - \frac{X_T^2}{2 (1-T) I_T} + \frac{1}{2} - \frac{\ln(1-T)}{2 I_T}. }
From~\eqref{E:MLE_Short} we have
\equ[E:Exp_MLE]{ \E_{\alpha} \hat{\alpha}_{MLE} = - \frac{1}{2 (1-T)} \E_{\alpha} [X_T^2/I_T] + \frac{1}{2} - \frac{\ln(1-T)}{2} \E_{\alpha} [1/I_T], }
where $\E_\alpha$ denotes expectation under $\alpha$.

In order to compute $\E_{\alpha} [X_T^2/I_T]$ and $\E_{\alpha} [1/I_T]$ we will use the following
\begin{prop}[See~\citet{Wil41} and~\citet{Cre81}]\label{P:Gen_Quot}
  Let $Y$ and $Z$ be two positive random variables and let $M_{Y,Z}(s,t) = \E \exp(sY + tZ)$ be their joint Laplace transform. Then, for $j,k \geq 0$
  \equ[E:Gen_Quot]{ \E[Y^j/Z^k] = \Gamma(k)^{-1} \int_0^\infty t^{k-1} \lim_{s \searrow 0} \frac{\partial^j}{\partial s^j} M_{Y,Z}(s,-t) dt. }
\end{prop}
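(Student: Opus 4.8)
The plan is to trade the negative power $Z^{-k}$ for an exponential using the gamma integral, and then restore the positive power $Y^j$ by differentiating the joint Laplace transform in its first argument. The point of departure is the elementary identity
\[ Z^{-k} = \frac{1}{\Gamma(k)} \int_0^\infty t^{k-1} e^{-tZ}\, dt, \]
which holds for each realization of $Z>0$ and each $k>0$ and follows from the definition of $\Gamma$ after the substitution $u = tZ$. (The case $j=0$ then reproduces the classical negative-moment formula for $Z$, and for $k=0$ the statement is trivial.)

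Multiplying this identity by $Y^j$ and taking expectations, I would first exchange $\E$ with the $t$-integral. Since $Y$ and $Z$ are positive and $t>0$, the integrand $t^{k-1} Y^j e^{-tZ}$ is nonnegative, so Tonelli's theorem justifies the exchange with no further hypothesis and gives
\[ \E[Y^j/Z^k] = \frac{1}{\Gamma(k)} \int_0^\infty t^{k-1}\, \E[Y^j e^{-tZ}]\, dt. \]
It then remains to identify the inner expectation as a derivative of $M_{Y,Z}$. Fixing $t>0$ and writing $g(s) = M_{Y,Z}(s,-t) = \E[e^{sY - tZ}]$, I would differentiate $j$ times under the expectation sign, using $\partial_s^j e^{sY - tZ} = Y^j e^{sY - tZ}$, to obtain $g^{(j)}(s) = \E[Y^j e^{sY - tZ}]$, and then let $s \searrow 0$ to recover $\E[Y^j e^{-tZ}]$. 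Substituting back produces exactly~\eqref{E:Gen_Quot}.

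The two displayed identities are routine; the real content is in justifying the analytic interchanges. The exchange of $\E$ with the $t$-integral is automatic from positivity, but the statement is only meaningful when $\E[Y^j/Z^k]<\infty$, which is what makes the right-hand side finite. The genuinely delicate step is differentiating $g$ under the expectation and passing to the limit $s\searrow 0$: because $Y$ is positive and unbounded, $e^{sY}$ is integrable only on a suitable (in general one-sided) neighborhood of the origin, so the argument requires that $M_{Y,Z}(\cdot,-t)$ and its $s$-derivatives exist there and are dominated by an integrable function uniformly for small $s$, after which dominated convergence carries both the differentiation and the limit inside $\E$. (In the intended application $Y=X_T^2$ and $Z=I_T$ are quadratic functionals of a Gaussian process, whose joint transform is finite on a genuine two-sided neighborhood of $s=0$, so this domain requirement is met.) Establishing this domination is the main obstacle, and it is precisely the analytic point treated in~\citet{Wil41} and~\citet{Cre81}.
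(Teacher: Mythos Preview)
The paper does not supply its own proof of this proposition: it is quoted as a known result with attribution to \citet{Wil41} and \citet{Cre81}, and is then used as a tool in the proof of Proposition~\ref{prop1}. Your argument is correct and is precisely the standard derivation found in those references---represent $Z^{-k}$ via the gamma integral, swap expectation and integral by Tonelli, and recover the factor $Y^j$ by differentiating the joint transform in $s$---so there is nothing to contrast.
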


\begin{proof}[Proof of Proposition \ref{prop1}]
It was shown in Theorem~21 in~\citet{barczy11b} that the joint Laplace transform $M(s,t) = \E_{\alpha} \exp(sX_T^2 + tI_T)$ of $X_T^2$ and $I_T$ is
\[ M(s,t) = \frac{(1-T)^{(1-2\alpha)/4}}{\sqrt{\cosh( u(-t) \ln(1-T) ) + \frac{1-2\alpha+4s(1-T)}{2u(-t)} \sinh( u(-t) \ln(1-T) )}}, \]
where $u(t) = \sqrt{8t+(2\alpha-1)^2}/2$. Applying Proposition~\ref{P:Gen_Quot} with $Z = I_T$, $j=0$, and $k=1$ it follows that
\[ \E_{\alpha}[I_T^{-1}] = \int_0^\infty \frac{(1-T)^{(1-2\alpha)/4}}{\sqrt{\cosh ( u(t) \ln(1-T) ) + \frac{1-2\alpha}{2u(t)} \sinh( u(t) \ln(1-T) )}} dt. \]
The substitution $u = u(t)$ yields
\equ[E:Int1]{ \E_{\alpha}[I_T^{-1}] = \sqrt{2} (1-T)^{\frac{1-2\alpha }{4}} \int_{\frac{1-2\alpha }{2}}^\infty \frac{u du}{\sqrt{(1 + \frac{1-2\alpha }{2u})(1-T)^u + (1 - \frac{1-2\alpha}{2u})(1-T)^{-u}}}, }
where we also used
\begin{align*}
  \cosh(u \ln(1-T)) &= \frac{1}{2}((1-T)^u + (1-T)^{-u}),\qquad \text{and} \\
  \sinh(u \ln(1-T)) &= \frac{1}{2}((1-T)^u - (1-T)^{-u}).
\end{align*}

The derivative of $M(s,t)$ with respect to $s$ is given by
\[ \frac{\partial}{\partial s} M(s,t) = \frac{-\frac{(1-T)^{(5-2\alpha)/4} }{u(-t)} \sinh\left( u(-t) \ln(1-T) \right) }{\left(\cosh\left( u(-t) \ln(1-T) \right) + \frac{1-2\alpha+4s(1-T)}{2u(-t)} \sinh\left( u(-t) \ln(1-T) \right)\right)^{3/2}}. \]
Applying Proposition~\ref{P:Gen_Quot} once again with $Y=X_T^2$, $Z = I_T$, $j=1$, and $k=1$ it follows that
\[ \E_{\alpha}[X_T^2/I_T] = \int_0^\infty \frac{-\frac{(1-T)^{(5-2\alpha)/4} }{u(t)} \sinh\left( u(t) \ln(1-T) \right) }{\left( \cosh ( u(t) \ln(1-T) ) + \frac{1-2\alpha}{2u(t)} \sinh( u(t) \ln(1-T) ) \right)^{3/2}} dt. \]
Again, the substitution $u = u(t)$ yields
\equ[E:Int2]{ \E_{\alpha}[X_T^2/I_T] = - \sqrt{2} (1-T)^{\frac{5-2\alpha}{4}} \int_{\frac{1-2\alpha }{2}}^\infty \frac{ (1-T)^u - (1-T)^{-u} }{\left((1 + \frac{1-2\alpha }{2u})(1-T)^u + (1 - \frac{1-2\alpha}{2u})(1-T)^{-u}\right)^{3/2}} du. }

Plugging~\eqref{E:Int1} and~\eqref{E:Int2} into~\eqref{E:Exp_MLE} we obtain \eqref{E:Gen_Form}.
\end{proof}

\begin{remark}
  Using Proposition~\ref{P:Gen_Quot} we could also find a formula for the mean squared error of $\hat{\alpha}_{MLE}$ since
  \begin{align*}
    \E_\alpha[(\hat{\alpha}_{MLE} - \alpha)^2] &= (\alpha - 1/2)^2 + \frac{\alpha - 1/2}{1-T} \E_{\alpha}[X_T^2/I_T] + (\alpha - 1/2) \ln(1-T) \E_{\alpha}[ 1/I_T ] \\
      &\qquad  + \frac{1}{4(1-T)^2} \E_{\alpha}[X_T^4/I_T^2] + (\ln(1-T))^2 \E_{\alpha}[1/I_T^2] + \frac{\ln(1-T)}{2(1-T)} \E_{\alpha}[X_T^2/I_T^2].
  \end{align*}
  However, at this time we do not see a way to simplify the occurring integrals significantly and thus we do not pursue this further.
\end{remark}

\begin{proof}[Proof of Proposition \ref{prop2}]
  From~\eqref{E:Gen_Form} we know that
  \equ[E:Short_Hand]{ \E_\alpha [ \hat{\alpha}_{MLE} ] - \alpha = \frac{1}{2} - \alpha + I_1(\alpha, T) - I_2(\alpha, T), }
  where
\equ[E:Int1Def]{ I_1(\alpha, T) := \frac{(1-T)^{\frac{1-2\alpha}{4}} }{\sqrt{2}} \int_{\frac{1-2\alpha}{2}}^\infty \frac{ (1-T)^u - (1-T)^{-u} }{\left((1 + \frac{1-2\alpha}{2u})(1-T)^u + (1 - \frac{1-2\alpha}{2u})(1-T)^{-u}\right)^{3/2}} du }
and
\equ[E:Int2Def]{ I_2(\alpha, T) := \frac{\ln(1-T)(1-T)^{\frac{1-2\alpha}{4}} }{\sqrt{2}} \int_{\frac{1-2\alpha}{2}}^\infty \frac{u}{\sqrt{(1 + \frac{1-2\alpha}{2u})(1-T)^u + (1 - \frac{1-2\alpha}{2u})(1-T)^{-u}}} du. }

The substitution $v=2u/(2\alpha-1)$ in $I_1(\alpha, T)$ yields
\begin{align*}
  I_1(\alpha, T) &= \frac{(1-T)^{\frac{1-2\alpha}{4}} }{\sqrt{2}} \frac{2\alpha-1}{2} \int_1^\infty \frac{ (1-T)^{v(2\alpha-1)/2} - (1-T)^{-v(2\alpha-1)/2} }{\left((1 - \frac{1}{v})(1-T)^{v(2\alpha-1)/2} + (1 + \frac{1}{v})(1-T)^{-v(2\alpha-1)/2}\right)^{3/2}} dv \\
    &= \frac{2\alpha-1}{2\sqrt{2}} \int_1^\infty \frac{ (1-T)^{(v-1/2)(2\alpha-1)/2} - (1-T)^{(-v-1/2)(2\alpha-1)/2} }{\left((1 - \frac{1}{v})(1-T)^{v(2\alpha-1)/2} + (1 + \frac{1}{v})(1-T)^{-v(2\alpha-1)/2}\right)^{3/2}} dv \\
\end{align*}
The terms $(1-T)^{(v-1/2)(2\alpha-1)/2}$ in the nominator and $(1 - \frac{1}{v})(1-T)^{v(2\alpha-1)/2}$ in the denominator of the integrand vanish as $\alpha$ tends to infinity and thus
\begin{align*}
  I_1(\alpha, T) &\sim - \frac{2\alpha-1}{2\sqrt{2}} \int_1^\infty \frac{ (1-T)^{(-v-1/2)(2\alpha-1)/2} }{\left((1 + \frac{1}{v})(1-T)^{-v(2\alpha-1)/2}\right)^{3/2}} dv \\
    &= - \frac{2\alpha-1}{2\sqrt{2}} \int_1^\infty \left(1 + \frac{1}{v} \right)^{-3/2} (1-T)^{(v-1)(2\alpha-1)/4} dv.
\end{align*}
By partial integration we obtain
\begin{align*}
 I_1(\alpha, T) &\sim - \frac{2\alpha-1}{2\sqrt{2}} \left[ \frac{4 \left(1 + \frac{1}{v} \right)^{-3/2} (1-T)^{(v-1)(2\alpha-1)/4}}{ (2\alpha-1) \ln(1-T) } \right]_1^\infty \\
    &\qquad + \frac{2\alpha-1}{2\sqrt{2}} \int_1^\infty \frac{ 3 \cdot 4 (1-T)^{(v-1)(2\alpha-1)/4}}{ 2 \left(1 + \frac{1}{v} \right)^{5/2} v^2 (2\alpha-1) \ln(1-T) } dv \\
   &= \frac{1}{2 \ln(1-T)} + \frac{3}{\ln(1-T)} \int_1^\infty \frac{(1-T)^{(v-1)(2\alpha-1)/4}}{\left(1 + \frac{1}{v} \right)^{5/2} v^2 } dv.
\end{align*}
The latter integral vanishes for large $\alpha$ and thus
\equ[E:Int1Final]{ I_1(\alpha, T) \sim \frac{1}{2 \ln(1-T)}. }

For the integral $I_2(\alpha, T)$ we proceed in a similar way. The substitution $v=2u/(2\alpha-1)$ yields
\[ I_2(\alpha, T) = \frac{\ln(1-T)}{\sqrt{2}} \left(\frac{2\alpha-1}{2}\right)^2 \int_1^\infty \frac{v dv}{\sqrt{(1 - \frac{1}{v})(1-T)^{(v+1)(2\alpha-1)/2} + (1 + \frac{1}{v})(1-T)^{(1-v)(2\alpha-1)/2}}}.  \]
The term $(1 - \frac{1}{v})(1-T)^{(v+1)(2\alpha-1)/2}$ vanishes as $\alpha$ tends to infinity and thus
\[ I_2(\alpha, T) \sim \frac{\ln(1-T)}{\sqrt{2}} \left(\frac{2\alpha-1}{2}\right)^2 \int_1^\infty \left( \frac{v^3}{v+1} \right)^{1/2} (1-T)^{(v-1)(2\alpha-1)/4} dv. \]
Partial integration yields
\begin{align*}
 I_2(\alpha, T) &\sim \frac{\ln(1-T)}{\sqrt{2}} \left(\frac{2\alpha-1}{2}\right)^2 \left[ \frac{4 \left( \frac{v^3}{v+1} \right)^{1/2} (1-T)^{(v-1)(2\alpha-1)/4}}{ (2\alpha-1) \ln(1-T) } \right]_1^\infty \\
    &\qquad- \frac{\ln(1-T)}{\sqrt{2}} \left(\frac{2\alpha-1}{2}\right)^2 \int_1^\infty \frac{4 v^2(2v+3) (1-T)^{(v-1)(2\alpha-1)/4}}{2\left( \frac{v^3}{v+1} \right)^{1/2} (v+1)^2 (2\alpha-1) \ln(1-T) } dv \\
   &= - \alpha + \frac{1}{2} - \frac{2\alpha-1}{2 \sqrt{2}} \int_1^\infty \frac{\sqrt{v} (2v+3)}{(v+1)^{3/2}} (1-T)^{(v-1)(2\alpha-1)/4} dv.
\end{align*}
Integrating by parts once again gives
\begin{align*}
 I_2(\alpha, T) &\sim - \alpha + \frac{1}{2} - \frac{2\alpha-1}{2 \sqrt{2}} \left[ \frac{4 \sqrt{v} (2v+3) (1-T)^{(v-1)(2\alpha-1)/4}}{ (v+1)^{3/2} (2\alpha-1) \ln(1-T) } \right]_1^\infty \\
    &\qquad+ \frac{2\alpha-1}{2 \sqrt{2}} \int_1^\infty \frac{3 \cdot 4 (1-T)^{(v-1)(2\alpha-1)/4}}{2 \sqrt{v} (v+1)^{5/2} (2\alpha-1) \ln(1-T) } dv  \\
   &= - \alpha + \frac{1}{2} + \frac{5}{2 \ln(1-T)} + \frac{3}{\sqrt{2} \ln(1-T)} \int_1^\infty \frac{(1-T)^{(v-1)(2\alpha-1)/4}}{\sqrt{v} (v+1)^{5/2}} dv.
\end{align*}
The latter integral vanishes for large $\alpha$ and thus
\equ[E:Int2Final]{ I_2(\alpha, T) \sim - \alpha + \frac{1}{2} + \frac{5}{2 \ln(1-T)}. }

By plugging~\eqref{E:Int1Final} and~\eqref{E:Int2Final} into~\eqref{E:Short_Hand} we obtain the result
\[ \E_\alpha [ \hat{\alpha}_{MLE} ] - \alpha \sim \frac{1}{2} - \alpha + \frac{1}{2 \ln(1-T)} + \alpha - \frac{1}{2} - \frac{5}{2 \ln(1-T)} = - \frac{2}{\ln(1-T)}. \qedhere \]
\end{proof}


\end{document}